\def\biblio{\bibliography{duality}\bibliographystyle{alpha}}
\definecolor{dark-red}{rgb}{0.5,0.15,0.15}
\definecolor{dark-blue}{rgb}{0.15,0.15,0.6}
\definecolor{dark-green}{rgb}{0.15,0.6,0.15}
\renewcommand*{\backref}[1]{}
\renewcommand*{\backrefalt}[4]{%
  \ifcase #1 %
No citations.
  \or
(cit. on p. #2).%
  \else
(cit on pp. #2).%
  \fi%
}
\newtheorem{thm}{Theorem}[section]
\newtheorem{theorem}{Theorem}[section]
\newtheorem*{theorem*}{Theorem}
\newtheorem{proposition}[thm]{Proposition}
\newtheorem{lemma}[thm]{Lemma}
\theoremstyle{definition}
\theoremstyle{remark}
\let\c@equation\c@thm
\numberwithin{equation}{section}
\DeclareMathOperator{\colim}{colim}
\newcommand{\Q}{\mathbb{Q}}
\DeclareMathOperator{\Tr}{Tr}
\DeclareMathOperator{\free}{free}
\DeclareMathOperator{\rank}{rank}
\newcommand{\Z}{\mathbb{Z}}
\Crefname{figure}{Figure}{Figures}
\Crefname{assu}{Assumption}{Assumptions}
\Crefname{lem}{Lemma}{Lemmas}
\Crefname{thm}{Theorem}{Theorems}
\Crefname{thma}{Theorem}{Theorems}
\Crefname{prop}{Proposition}{Propositions}
\let\lim\relax
\DeclareMathOperator{\lim}{lim}
\newcommand{\bG}{\mathbb{G}}
\newcommand{\F}{\mathbb{F}}
\newcommand{\cL}{\mathcal{L}}
\DeclareMathOperator{\Spf}{Spf}
\DeclareMathOperator{\Level}{Level}
\title{Transfer ideals and torsion in the Morava $E$-theory of abelian groups}
\author{Tobias Barthel}
\address{Max Planck Institute for Mathematics, Vivatsgasse 7, 53111 Bonn, Germany}
\email{tbarthel@mpim-bonn.mpg.de}
\thanks{The authors would like to thank the Max Planck Institute for Mathematics for its hospitality and Jeremy Hahn for helpful conversations. The second author was supported by NSF grant DMS-1906236.}
\author{Nathaniel Stapleton}
\address{University of Kentucky\\ Lexington, Kentucky, USA}
\email{nat.j.stapleton@uky.edu}
\subjclass[2010]{55N22 (primary); 55P42, 55S25 (secondary)}
\date{\today}
\begin{document}

\begin{abstract}
Let $A$ be a finite abelian $p$ group of rank at least $2$. We show that $E^0(BA)/I_{tr}$, the quotient of the Morava $E$-cohomology of $A$ by the ideal generated by the image of the transfers along all proper subgroups, contains $p$-torsion. The proof makes use of transchromatic character theory.
\end{abstract}

\maketitle

\setcounter{tocdepth}{1}
\def\biblio{}

\section{Introduction}

The close relationship between the Morava $E$-cohomology of spaces and algebraic geometry has played an important role in problems related to power operations for Morava $E$-theory, character theory for Morava $E$-theory, and understanding $H_\infty$-ring maps between $H_{\infty}$-ring spectra and $E$. Let $\bG$ be the universal deformation formal group associated to $E$ and write $I_{tr}$ for the transfer ideal. A fundamental result of Strickland's in \cite{etheorysym} shows that $E^0(B\Sigma_{p^k})/I_{tr}$ corepresents the formal scheme parametrizing subgroup schemes of order $p^k$ in $\bG$. For instance, this theorem plays a vital role in Rezk's study \cite{rezk_congruence, rezk_koszul} of the Dyer--Lashof algebra for Morava $E$-theory. One of the key ingredients in Strickland's proof is the fact that $E^0(B\Sigma_{p^k})/I_{tr}$ is a free $E^0$-module. 

Let $\Level(A^*, \bG)$ be the formal scheme of $A^*$-level structures in $\bG$, where $A^*$ is the Pontryagin dual of a finite abelian $p$-group $A$. In \cite{drinfeldell1}, it is shown that the ring of functions on $\Level(A^*, \bG)$ is a finitely generated free $E^0$-module. Proposition 7.5 in \cite{ahs} implies that there is a close relationship between $E^0(BA)/I_{tr}$ and the ring of functions on $\Level(A^*, \bG)$. In fact, if we let $(E^0(BA)/I_{tr})^{\free}$ be the image of $E^0(BA)/I_{tr}$ in $\Q \otimes E^0(BA)/I_{tr}$, then there is a canonical isomorphism
\[
\Spf((E^0(BA)/I_{tr})^{\free}) \cong \Level(A^*,\bG).
\]
For a transitive abelian subgroup $A \subseteq \Sigma_{p^k}$, Ando, Hopkins, and Strickland use this isomorphism to give an algebro-geometric description of the power operation
\[
E^0 \xrightarrow{P_{p^k}/I_{tr}} E^0(B\Sigma_{p^k})/I_{tr} \to E^0(BA)/I_{tr} \to (E^0(BA)/I_{tr})^{\free}.
\]

This motivates the question of whether $E^0(BA)/I_{tr}$ contains $p$-torsion, i.e., if 
\[
(E^0(BA)/I_{tr})^{\free} \ncong E^0(BA)/I_{tr}.
\]
These two rings are isomorphic when $A$ is cyclic. In contrast, the main result of this note implies that they are distinct when the rank of $A$ is greater than $1$.

\begin{theorem*}[\Cref{mainthm}]
Let $A$ be a finite abelian $p$-group with $\rank(A) \geq 2$. Let $I_{tr} \subset E^0(BA)$ be the ideal generated by the image of transfers along all proper subgroups of $A$. The $E^0$-algebra $E^0(BA)/I_{tr}$ contains $p$-torsion.
\end{theorem*}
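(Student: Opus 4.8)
The plan is to detect the torsion through a numerical comparison between the special fibre and the generic rank of $D := E^0(BA)/I_{tr}$. Write $E^0$ for the coefficient ring (a regular local domain), $\fm$ for its maximal ideal, and $k = E^0/\fm$ for the residue field, of characteristic $p$. By Proposition~7.5 of \cite{ahs} together with the freeness result of \cite{drinfeldell1}, the torsion-free quotient $D^{\free} = (E^0(BA)/I_{tr})^{\free} \cong \cO_{\Level(A^*,\bG)}$ is a finite free $E^0$-module; call its rank $r$. Since inverting $p$ identifies $D[1/p]$ with $D^{\free}[1/p]$, the kernel of $D \to D^{\free}$ is exactly the $p$-power torsion of $D$, so $D$ contains $p$-torsion iff $D$ is not free. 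Over a Noetherian local domain a finitely generated module is free iff its minimal number of generators equals its generic rank, i.e. iff $\dim_k(D \otimes_{E^0} k) = r$, and one always has $\dim_k(D \otimes_{E^0} k) \geq r$. So the theorem reduces to the strict inequality
\[
\dim_k\big(D \otimes_{E^0} k\big) > r.
\]

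First I would cut $I_{tr}$ down to finitely many explicit generators. Each restriction $\res_H^A\colon E^0(BA)\to E^0(BH)$ is surjective (it is restriction of functions along the closed immersion $\bG[H^*]\hookrightarrow\bG[A^*]$), so Frobenius reciprocity gives $\im\tr_H^A = (\tr_H^A(1))$; transitivity of transfers then shows $I_{tr} = \big(\tr_H^A(1) : H\subset A \text{ maximal}\big)$, indexed by the index-$p$ subgroups. To compute $r$ and to understand these generators I would use (transchromatic) character theory: rationally $L\otimes E^0(BA) \cong \Map(\Hom(\Z_p^n,A),L)$, the transfer becomes the sum-over-lifts map, and $\tr_H^A(1)$ becomes the indicator $\mathbf 1_{\Hom(\Z_p^n,H)}$. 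The factors indexed by the surjections $\Z_p^n\twoheadrightarrow A$ (dually, the monomorphisms $A^*\hookrightarrow(\Q_p/\Z_p)^n$) receive no transfer contribution and survive in $D\otimes L$; these index precisely the $A^*$-level structures, whence $r$ is their number (for $A=(\Z/p)^2$ it is $(p^n-1)(p^n-p)$).

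The heart of the argument is the mod-$\fm$ computation of the generators. Reducing, $E^0(BA)\otimes_{E^0}k \cong \cO_{\overline{\bG}[A^*]}$ for the height-$n$ formal group $\overline{\bG}$ over $k$; for $A=(\Z/p)^2$ this is $R := k[x,y]/(x^{p^n},y^{p^n})$. I expect each $\overline{\tr_H^A(1)}$ to be the Euler class of the normal direction to $H$, namely $\ell_H^{\,p^n-1}$, where $\ell_H$ is the linear form cutting out $\bG[H^*]$ (because $[p](w)/w \equiv w^{p^n-1}\bmod\fm$), so that in the rank-two case
\[
\overline{I_{tr}} = \big(\ell^{\,p^n-1} : \ell\in\bP^1(\F_p)\big)\subset R,
\]
the $(p^n-1)$-st powers of the $p+1$ linear forms defined over $\F_p$. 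A direct commutative-algebra count of the surviving monomials then gives $\dim_k R/\overline{I_{tr}} > r$; for instance when $p=n=2$ one finds $\dim_k k[x,y]/(x^3,y^3,(x+y)^3) = 7 > 6 = r$, already witnessing torsion.

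The main obstacle is twofold. First, rigorously identifying the reductions $\overline{\tr_H^A(1)}$ — including the lower-order corrections coming from the formal group law, and the analogue for non-elementary-abelian $A$, where the normal coordinate is a $[p^j]$-series rather than a linear form — is the technical crux, and is exactly where I would expect the transchromatic character machinery to do real work, relating the height-$n$ transfer modulo $p$ to a lower-height Euler class. Second, one must establish the strict inequality $\dim_k\big(\cO_{\overline{\bG}[A^*]}/\overline{I_{tr}}\big) > r$ uniformly for all $A$ of rank $\geq 2$; I would first reduce to an elementary-abelian model and a clean count of the monomials surviving the powers-of-linear-forms relations, and then handle the general case by the same bookkeeping. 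Everything else is formal.
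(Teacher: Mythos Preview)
Your strategy is genuinely different from the paper's, and as you yourself flag, it is incomplete at exactly the two points that matter.

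The paper never compares ranks at the generic and special fibres. Instead it base-changes along the flat map $E^0 \to \bar C_1$ of \cite{Centralizers}, under which $E^0(BA)/I_{tr}$ splits as a product over $A^{\times n-1}$ of copies of $\bar C_1 \otimes_{\Z_p} K_p^0(BA)/I_{tr}^{(a_i)}$; the factor at the zero tuple has $I_{tr}^{(0)}$ equal to the full transfer ideal. One then shows, by an elementary class-function identity, that in $K_p^0(B(\Z/p)^2)$ one has $p = \sum_i \Tr_{V_i}^A(1) - \Tr_0^A(1) \in I_{tr}$, so $K_p^0(BA)/I_{tr}$ is a nonzero $\F_p$-algebra for any $A$ of rank $\geq 2$ (via a chosen surjection $A \twoheadrightarrow (\Z/p)^2$). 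Faithful flatness of $\bar C_1$ over $\Z_p$ and flatness over $E^0$ then push the torsion back to $E^0(BA)/I_{tr}$. No computation of $r$, no mod-$\fm$ Euler classes, no dimension count.

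Regarding your two obstacles: the first is easier than you suggest, and the second is harder. The identification $\overline{\tr_H^A(1)} = (\text{unit})\cdot \ell_H^{\,p^n-1}$ follows from the standard formula $\tr_e^{\Z/p}(1) = [p](x)/x$ pulled back along $BA \to B(A/H)$ and reduced mod $\fm$; transchromatic character theory is not the right tool here, since it produces a base change to a flat $E^0$-algebra rather than information about the special fibre $E^0/\fm$. By contrast, the strict inequality $\dim_k\big(k[x,y]/(x^{q},y^{q},\ell^{\,q-1}:\ell\in\bP^1(\F_p))\big) > (q-1)(q-p)$ for $q=p^n$, and its analogue for general $A$, is left entirely open in your sketch beyond the single check $p=n=2$; this is a genuine combinatorial problem and is where your argument currently has a gap. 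The paper's route sidesteps it completely.
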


In particular, this answers a question raised in \cite[Remark 5.3]{NatZhen}. Our proof uses the transchromatic character maps of \cite{Centralizers} to first reduce to a question about $p$-adic $K$-theory. We then use classical character theory to show that $K_{p}^0(BA)/I_{tr}$ is non-zero and contains torsion when $\rank(A) \geq 2$.

\section{$p$-adic $K$-theory of finite abelian $p$-groups and torsion}

Fix a prime $p$ and let $K_{p}$ denote $p$-adic $K$-theory. Given a finite $p$-group $G$, there is a canonical isomorphism
\[
\Z_p \otimes R(G) \xrightarrow{\cong} K_{p}^{0}(BG),
\]
where $R(G)$ is the representation ring of $G$. It follows that there is an isomorphism
\[
K_{p}^{0}(B\Z/p^k) \cong \Z_p[x]/(x^{p^k}-1) \cong \Z_p[x]/([p^k](x)), 
\]
where $[p^k](x)$ is the $p^k$-series for the multiplicative formal group law. Since $R(G \times H) \cong R(G) \otimes R(H)$ for finite groups $G$ and $H$, the isomorphisms above give an explicit description of $K_{p}^0(BA)$ for any finite abelian $p$-groups $A$.

Character theory for $p$-adic $K$-theory was developed by Adams in \cite[Section 2]{Adams-Maps2} and generalized to all Morava $E$-theories by Hopkins, Kuhn, and Ravenel in \cite{hkr}. Let $D = \colim_{i \geq 0} \Z_p(\zeta_{p^i})$, the ring obtained from $\Z_p$ by adjoining all of the $p^i$th roots of unity as $i$ varies. Let $C_0 = \Q \otimes D$. For a finite group $G$, the character map is a canonical inclusion of $\Z_p$-algebras
\[
K_{p}^{0}(BG) \hookrightarrow Cl_p(G,C_0),
\]
where $Cl_p(G,C_0)$ is the ring of $C_0$-valued functions on the set of conjugacy classes of $p$-power order elements in $G$. By construction, the character map factors through $Cl_p(G,D)$, the ring of $D$-valued functions on the set of conjugacy classes of $p$-power order elements in $G$, giving
\begin{equation} \label{factor}
K_{p}^{0}(BG) \hookrightarrow Cl_p(G,D) \hookrightarrow Cl_p(G,C_0).
\end{equation}
The ring $D$ is local with maximal ideal $(p) \subset D$.

Given $H \subseteq G$, there is a transfer map $K_{p}^{0}(BH) \to K_{p}^{0}(BG)$, which is a map of modules for the $K_{p}^{0}(BG)$-module structure coming from restriction. This transfer map is compatible with the transfer map on class functions
\[
\Tr_{H}^{G} \colon Cl_p(H,C_0) \to Cl_p(G,C_0)
\]
given by
\[
\Tr_{H}^{G}(f)([g]) = \frac{1}{|H|} \sum_{k \in G, kg k^{-1} \in H} f(k g k^{-1}),
\]
see for example \cite[Theorem D]{hkr}. Given a finite abelian $p$-group $A$ and a subgroup $A' \subseteq A$, the transfer map $K_{p}^0(BA') \to K_{p}^0(BA)$ is determined by the image of $1 \in K_{p}^0(BA')$ as the restriction $K_{p}^0(BA) \to K_{p}^0(BA')$ is surjective. Applying the formula above, we have
\begin{equation} \label{formula1}
\Tr_{A'}^{A}(1)(a) = 
\begin{cases}
|A/A'| & \text{if } a \in A' \\
0 & \text{otherwise}.
\end{cases}
\end{equation}
We define $I_{tr} \subset K_{p}^0(BA)$ to be the ideal generated by the image of the transfer maps along all proper subgroups of $A$. We will abuse notation and denote the ideal generated by transfers along all proper subgroups in $Cl_p(A,C_0)$ by $I_{tr}$ as well. In fact, since the class functions in \eqref{formula1} take values in the integers, we may also consider the ideal $I_{tr}$ in $Cl_p(A,D)$.

\begin{lemma} \label{prop1}
Assume $A$ is a finite abelian $p$-group, then the commutative ring $K_{p}^0(BA)/I_{tr}$ is non-zero.
\end{lemma}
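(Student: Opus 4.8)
The plan is to show that $1 \notin I_{tr}$ by producing a ring homomorphism out of $K_{p}^0(BA)$ that kills $I_{tr}$ but sends the unit to a nonzero element; this forces the quotient to be nonzero. The natural target is the residue field of $D$. Since $A$ is an abelian $p$-group, every element has $p$-power order and every conjugacy class is a singleton, so the character inclusion \eqref{factor} identifies $K_{p}^0(BA)$ with a subring of $Cl_p(A,D) = \Map(A,D)$, the ring of all $D$-valued functions on $A$.

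First I would consider evaluation at the identity element $e \in A$, which defines a $\Z_p$-algebra homomorphism $\ev_e \colon Cl_p(A,D) \to D$, and restrict it to $K_{p}^0(BA)$. Because the character map is a ring map, the unit $1 \in K_{p}^0(BA)$ corresponds to the constant function with value $1$, so $\ev_e(1) = 1$. Next I would evaluate the generators of $I_{tr}$: by \eqref{formula1}, for any proper subgroup $A' \subsetneq A$ the element $\Tr_{A'}^{A}(1)$ is the function taking the value $|A/A'|$ on $A'$ and $0$ elsewhere, and since $e \in A'$ we get $\ev_e(\Tr_{A'}^{A}(1)) = |A/A'|$, a positive power of $p$. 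As $\ev_e$ is multiplicative and $D$-linear, every element of the ideal $I_{tr}$ is carried into the maximal ideal $(p) \subset D$.

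Composing with the quotient $D \to D/(p)$ then yields a ring homomorphism $K_{p}^0(BA) \to D/(p)$ that sends $I_{tr}$ to $0$ and $1$ to $1 \neq 0$; hence it factors through a nonzero homomorphism $K_{p}^0(BA)/I_{tr} \to D/(p)$, which proves the claim. Note that this argument is uniform in $A$ and requires no hypothesis on the rank, consistent with the statement of the lemma.

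I do not anticipate a serious obstacle: the content is simply that evaluation at the identity detects the unit while sending every transfer into $(p)$. The only points needing (routine) care are the identification of $I_{tr}$ with the ideal generated by the functions $\Tr_{A'}^{A}(1)$—valid because the restriction $K_{p}^0(BA) \to K_{p}^0(BA')$ is surjective, so the image of each transfer is the principal ideal it generates—and the fact, recorded in the excerpt, that $D$ is local with maximal ideal $(p)$, so that $D/(p)$ is a field in which $1 \neq 0$.
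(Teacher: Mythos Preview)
Your proof is correct and follows essentially the same approach as the paper's: project to the factor of $Cl_p(A,D)$ corresponding to the identity element and observe, via \eqref{formula1}, that every transfer lands in the maximal ideal $(p)\subset D$. The only cosmetic differences are that you phrase this as a ring homomorphism to $D/(p)$ and give a uniform argument, whereas the paper treats the cyclic case separately as ``clear'' before running the same projection argument for $\rank(A)\geq 2$.
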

\begin{proof}
This is clear when $A$ is cyclic. Now we assume that $\rank(A) \geq 2$. Since there is a map of commutative rings
\[
K_{p}^0(BA)/I_{tr} \rightarrow Cl_p(A,D)/I_{tr} \cong \left (\prod_{a \in A}D \right )/I_{tr},
\]
it suffices to prove that the target is non-zero. Consider the factor of $\prod_{a \in A}D$ corresponding to $0 \in A$. By the formula in \eqref{formula1}, the projection of $I_{tr}$ to that factor is contained in the ideal $(p) \subset D$.
\end{proof}

\begin{lemma} \label{prop2}
The commutative ring $K_{p}^0(B(\Z/p)^{\times 2})/I_{tr}$ is an $\F_p$-algebra.
\end{lemma}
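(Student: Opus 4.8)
The plan is to show directly that $p \in I_{tr}$, so that multiplication by $p$ is zero on the quotient and $K_p^0(BA)/I_{tr}$ is an $\F_p$-algebra. Writing $A = (\Z/p)^{\times 2}$, I would first record the explicit model
\[
K_p^0(BA) \cong \Z_p \otimes R(A) \cong \Z_p[x_1,x_2]/(x_1^p - 1, x_2^p - 1)
\]
coming from the isomorphisms recalled above, where $x_1, x_2$ are the two standard one-dimensional characters. The proper subgroups of $A$ are the trivial subgroup together with the $p+1$ subgroups of order $p$, and $I_{tr}$ is generated by the transfers of $1$ along these.

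The key computation is to identify each such transfer explicitly inside $R(A)$. Since transfer in $K$-theory corresponds to induction of representations, for a proper subgroup $A' \subset A$ we have $\Tr_{A'}^A(1) = \Ind_{A'}^A(1) = \sum_\chi \chi$, the sum over all characters $\chi \in A^*$ that are trivial on $A'$ (equivalently $\chi \in (A/A')^*$); one checks this agrees with the class-function formula \eqref{formula1}. For the trivial subgroup this yields the regular representation $N := \sum_{\chi \in A^*}\chi$, while for a subgroup $L$ of order $p$ the characters trivial on $L$ form a subgroup of order $p$ in $A^* \cong \F_p^2$, i.e.\ a line through the origin. As $L$ ranges over the $p+1$ order-$p$ subgroups, these lines range over all $p+1$ lines through the origin in $A^*$.

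I would then exploit an inclusion--exclusion count on $A^* \cong \F_p^2$: every nontrivial character lies on exactly one line through the origin, whereas the trivial character lies on all $p+1$ of them. Summing the transfers along the order-$p$ subgroups therefore gives
\[
\sum_{L} \Tr_L^A(1) = (p+1)\cdot 1 + \sum_{1 \neq \chi \in A^*}\chi = p + N.
\]
Both the left-hand side (a sum of generators of $I_{tr}$) and $N$ (the transfer from the trivial subgroup) lie in $I_{tr}$, so their difference $p$ lies in $I_{tr}$, which is exactly what we want.

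The content is not in any single estimate but in pinning down the transfers as honest elements of $R(A)$ and getting the combinatorics right; the main thing to be careful about is that the trivial-subgroup transfer $N$ is genuinely available as a generator, so that the excess multiplicity $p+1$ can be corrected down to $p$, and that the identification of $\Tr_{A'}^A(1)$ with the induced representation is compatible with \eqref{formula1}. One could alternatively avoid invoking the trivial subgroup by noting that $N = \left(\sum_i x_1^i\right)\left(\sum_j x_2^j\right)$ is a product of two of the line-transfers, so that $p$ already lies in the ideal generated by the transfers along the order-$p$ subgroups alone.
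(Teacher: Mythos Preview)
Your proof is correct and follows essentially the same approach as the paper: both establish the identity $p = \sum_{L} \Tr_L^A(1) - \Tr_0^A(1)$, with you expanding the induced representations as sums of characters in $R(A)$ while the paper verifies the same equality by evaluating class functions pointwise. Your additional observation that $N$ is already a product of two of the line-transfers (so the trivial-subgroup transfer is not needed) is a nice bonus not present in the paper.
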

\begin{proof}
By \cref{prop1}, it suffices to show that $(p) \subseteq I_{tr}$. Let $V_1, \ldots, V_{p+1}$ be the maximal abelian subgroups of $A=(\Z/p)^{\times 2}$, then 
\[
p = \left ( \sum_{i=1}^{p+1} \Tr_{V_i}^A(1) \right ) - \Tr_{0}^A(1).
\]
This can be checked in the ring of class functions. The value of the class function
\[
\sum_{i=1}^{p+1} \Tr_{V_i}^A(1)
\]
is $p$ on any non-zero element of $A$ and $p^2+p$ on $0 \in A$.
\end{proof}

\begin{proposition} \label{prop3}
Assume $A$ is a finite abelian $p$-group with $\rank(A) \geq 2$, then the commutative ring $K_{p}^0(BA)/I_{tr}$ is an $\F_p$-algebra.
\end{proposition}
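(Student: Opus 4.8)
The plan is to reduce to the rank-two case already settled in \Cref{prop2}. Being an $\F_p$-algebra is equivalent to the containment $(p) \subseteq I_{tr}$, so it suffices to exhibit $p$ as an element of $I_{tr} \subset K_p^0(BA)$. Since $\rank(A) \geq 2$, the quotient $A/pA$ is isomorphic to $(\Z/p)^{\times \rank(A)}$, and projecting onto two of the coordinates produces a surjection $\pi \colon A \twoheadrightarrow Q$ with $Q = (\Z/p)^{\times 2}$. I will transport the relation $p \in I_{tr}$ from $Q$ to $A$ along the induced ring map $\pi^* \colon K_p^0(BQ) \to K_p^0(BA)$.

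The key step is the comparison of the two transfer ideals under $\pi^*$. For a proper subgroup $V \subsetneq Q$, its preimage $\pi^{-1}(V)$ is a proper subgroup of $A$, since surjectivity forces $\pi(\pi^{-1}(V)) = V \neq Q$. I claim that
\[
\pi^*\big(\Tr_V^Q(1)\big) = \Tr_{\pi^{-1}(V)}^A(1).
\]
This can be checked directly on class functions, using that the character map is compatible with both restriction along $\pi$ and with transfer: by \eqref{formula1} the left-hand side sends $a \in A$ to $|Q/V|$ when $\pi(a) \in V$ and to $0$ otherwise, while the right-hand side sends $a$ to $|A/\pi^{-1}(V)|$ when $a \in \pi^{-1}(V)$ and to $0$ otherwise; these agree because $\pi(a) \in V \iff a \in \pi^{-1}(V)$ and $|A/\pi^{-1}(V)| = |Q/V|$. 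Equivalently, one observes that the square relating $B(\pi^{-1}(V)) \to BA$ to $BV \to BQ$ is a homotopy pullback, so base change identifies $\pi^*$ of the transfer with the transfer of $\pi^*$.

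Because $\pi^*$ is a ring homomorphism and $I_{tr}^{Q}$ is generated by the elements $\Tr_V^Q(1)$, the displayed identity yields $\pi^*(I_{tr}^{Q}) \subseteq I_{tr}^{A}$. By \Cref{prop2} we have $p \in I_{tr}^{Q}$, and since $\pi^*$ fixes $p \in \Z_p \subseteq K_p^0(BQ)$ we conclude $p = \pi^*(p) \in \pi^*(I_{tr}^{Q}) \subseteq I_{tr}^{A}$. Hence $(p) \subseteq I_{tr}$, so $K_p^0(BA)/I_{tr}$ is an $\F_p$-algebra, which by \Cref{prop1} is moreover non-zero. The only point requiring care is the transfer base-change identity, but as indicated it follows at once from the explicit class-function formula \eqref{formula1}, so I do not expect a genuine obstacle.
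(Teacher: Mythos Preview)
Your argument is correct and follows essentially the same route as the paper: choose a surjection onto $(\Z/p)^{\times 2}$, observe that preimages of proper subgroups are proper, deduce that $\pi^*$ carries $I_{tr}^Q$ into $I_{tr}^A$, and then invoke \Cref{prop2}. The paper is terser---it simply asserts that $\rho$ induces a ring map on the quotients---whereas you make the underlying reason explicit by verifying the base-change identity $\pi^*\big(\Tr_V^Q(1)\big) = \Tr_{\pi^{-1}(V)}^A(1)$ via \eqref{formula1} and the injectivity of the character map; this extra detail is harmless and arguably clarifying.
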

\begin{proof}
Fix a surjection $\rho \colon A \twoheadrightarrow (\Z/p)^{\times 2}$. Since $\rho$ is surjective, given any proper subgroup $H \subset (\Z/p)^{\times 2}$, $\rho^{-1}(H)$ is a proper subgroup of $A$. Thus the map $\rho$ induces a map of commutative rings
\[
K_{p}^0(B(\Z/p)^{\times 2})/I_{tr} \to K_{p}^0(BA)/I_{tr}.
\]
It suffices to prove that $K_{p}^0(B(\Z/p)^{\times 2})/I_{tr}$ is an $\F_p$-algebra, which is the content of \cref{prop2}.
\end{proof}

\section{Morava $E$-theory of finite abelian $p$-groups and torsion}

Let $E$ be a height $n$ Morava $E$-theory at the prime $p$ and let $A$ be a finite abelian $p$-group. Just as before, we define the transfer ideal $I_{tr} \subset E^0(BA)$ to be the ideal generated by the image of the transfer maps along proper subgroups of $A$. 

Morava $E$-theory admits a variety of versions of character theory (e.g., \cite{hkr}, \cite{Greenlees-Strickland}, \cite{tgcm}, \cite{twisted}, \cite{Centralizers}). In \cite[Section 5]{Centralizers} the authors constructed a commutative ring $\bar{C}_1$, which depends on $n$ and $p$, that is a flat $E^0$-algebra and a faithfully flat $\Z_p$-algebra with the property that
\[
\bar{C}_1 \otimes_{E^0} E^0(BA) \cong \prod_{A^{\times n-1}} \bar{C}_1 \otimes_{\Z_p} K_{p}^0(BA).
\]
In fact, the isomorphism above can be extended to all finite groups but we will not need that generality here. Write $\cL$ for the free loop space functor. Since $\cL^{n-1}BA \simeq A^{\times n-1} \times BA$, we have an isomorphism
\[
\prod_{A^{\times n-1}} \bar{C}_1 \otimes_{\Z_p} K_{p}^0(BA) \cong \bar{C}_1 \otimes_{\Z_p} K_{p}^0(\cL^{n-1}BA).
\]

The isomorphism is compatible with transfers in the sense that there is a commutative diagram of abelian groups
\[
\xymatrix{E^0(BA') \ar[r] \ar[d]_{\Tr} & \bar{C}_1 \otimes_{\Z_p} K_{p}^0(\cL^{n-1}BA') \ar[d]^{\bar{C}_1 \otimes_{\Z_p} \Tr} \\ E^0(BA) \ar[r] &  \bar{C}_1 \otimes_{\Z_p} K_{p}^0(\cL^{n-1}BA),}
\]
where the transfer on the right is along the finite cover $\cL^{n-1}BA' \to \cL^{n-1}BA$. The commutativity of this diagram is due to the fact that it is induced by a map of cohomology theories. This is explained in Proposition 4.14 in \cite{BP}.

We say that a commutative ring $R$ contains torsion if the underlying $\Z$-module contains torsion. For a flat map $R \to S$ of commutative rings, an $R$-module $M$ contains $p$-torsion if $S \otimes_R M$ contains $p$-torsion. This follows from the fact that if $p\colon M \to M$ is injective then $p\colon S \otimes_R M \to S\otimes_RM$ is injective. 

\begin{theorem} \label{mainthm}
Assume that $A$ is an abelian $p$-group with $\rank(A) \geq 2$, then the commutative ring $E^0(BA)/I_{tr}$ contains $p$-torsion.
\end{theorem}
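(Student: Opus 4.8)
The plan is to apply flat base change along $E^0 \to \bar{C}_1$ to transport the question to $p$-adic $K$-theory, where \Cref{prop1} and \Cref{prop3} already settle it. Since $\bar{C}_1$ is flat over $E^0$, the remark preceding the theorem reduces us to showing that $\bar{C}_1 \otimes_{E^0}(E^0(BA)/I_{tr})$ contains $p$-torsion. As $\bar{C}_1$ is flat and tensoring is right exact, this ring is the quotient of $\bar{C}_1 \otimes_{E^0} E^0(BA)$ by the ideal generated by the images of the transfer classes; invoking the character isomorphism together with the commuting square relating $E$-theory transfers to transfers along $\cL^{n-1}BA' \to \cL^{n-1}BA$, I would identify it with
\[
\big(\bar{C}_1 \otimes_{\Z_p} K_p^0(\cL^{n-1}BA)\big)/J \;\cong\; \Big(\textstyle\prod_{A^{\times n-1}} \bar{C}_1 \otimes_{\Z_p} K_p^0(BA)\Big)/J,
\]
where $J$ denotes the transfer ideal on the $K$-theory side.

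The crux is then a bookkeeping step about how $J$ sits inside the finite product. First I would compute the image of each generating transfer class: the class coming from a proper subgroup $A' \subset A$ is supported on the factors indexed by tuples $(a_1, \dots, a_{n-1}) \in {A'}^{\times n-1}$, where its value is $\Tr_{A'}^A(1)$, and vanishes on the remaining factors. Since every ideal in a finite product of rings is the product of its images in the factors, $J = \prod_{(a_i)} \pi_{(a_i)}(J)$, and the quotient decomposes accordingly as a product indexed by $A^{\times n-1}$. I would then isolate the factor indexed by $0 = (0,\dots,0)$: because every subgroup contains $0$, the projection $\pi_0(J)$ is exactly the ideal generated by all the $\Tr_{A'}^A(1)$, i.e.\ the image of $I_{tr} \subset K_p^0(BA)$. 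Hence this factor is precisely $\bar{C}_1 \otimes_{\Z_p}(K_p^0(BA)/I_{tr})$.

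To conclude, I would note that this distinguished factor is a \emph{nonzero} $\F_p$-algebra: it is annihilated by $p$ by \Cref{prop3}, and it is nonzero because $K_p^0(BA)/I_{tr} \neq 0$ by \Cref{prop1} while $\bar{C}_1$ is faithfully flat over $\Z_p$. A nonzero $\F_p$-algebra consists entirely of $p$-torsion, and as it occurs as a direct factor of the product ring $\bar{C}_1 \otimes_{E^0}(E^0(BA)/I_{tr})$, that ring contains $p$-torsion; the reduction from the first step then gives the theorem.

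The step I expect to be the main obstacle is keeping the direction of the implications straight in this last move. It is tempting to argue directly from the \emph{surjection} onto $\bar{C}_1 \otimes_{\Z_p}(K_p^0(BA)/I_{tr})$, but $p$-torsion in a quotient does not imply $p$-torsion upstairs (witness $\Z \twoheadrightarrow \F_p$). The decomposition of $J$ as a product of ideals is exactly what promotes this surjection to a splitting off a direct factor, so that $p$-torsion in that factor genuinely produces $p$-torsion in $\bar{C}_1 \otimes_{E^0}(E^0(BA)/I_{tr})$; together with flatness of $\bar{C}_1$ over $E^0$, this descends the torsion back to $E^0(BA)/I_{tr}$.
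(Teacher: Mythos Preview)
Your argument is correct and follows essentially the same route as the paper: base-change along the flat map $E^0 \to \bar{C}_1$, identify the factor at the zero tuple with $\bar{C}_1 \otimes_{\Z_p} (K_p^0(BA)/I_{tr})$, and invoke \Cref{prop1} and \Cref{prop3} together with faithful flatness of $\bar{C}_1$ over $\Z_p$. The only cosmetic difference is that the paper packages your bookkeeping step (the product decomposition of $J$ and the identification of $\pi_0(J)$) into a citation of \cite[Theorem~6.9]{Centralizers}, whereas you derive the piece you need directly from the displayed commutative square; your care about the direct-factor splitting versus a mere surjection is well placed and is exactly what that citation encodes.
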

\begin{proof}
It follows from Theorem 6.9 in \cite{Centralizers} that
\[
\bar{C}_1 \otimes_{E^0} E^0(BA)/I_{tr} \cong \prod_{(a_i) \in A^{\times n-1}} \bar{C}_1 \otimes_{\Z_p} K_{p}^{0}(BA)/I_{tr}^{(a_i)},
\]
where $I_{tr}^{(a_i)}$ is a certain transfer ideal depending on the tuple $(a_i)$. The transfer ideals in the target were studied in \cite{NatZhen}. The ideal $I_{tr}^{(0)} \subset K_{p}^{0}(BA)$ is the ideal generated by transfers from all proper subgroups of $A$. \cref{prop3} implies that $K_{p}^{0}(BA)/I_{tr}^{(0)}$ has torsion. Since $\bar{C}_1$ is faithfully flat as a $\Z_p$-algebra, the ring
\[
\prod_{(a_i) \in A^{\times n-1}} \bar{C}_1 \otimes_{\Z_p} K_{p}^{0}(BA)/I_{tr}^{(a_i)}
\]
contains torsion, hence so does $\bar{C}_1 \otimes_{E^0} E^0(BA)/I_{tr}$. Since $\bar{C}_1$ is a flat $E^0$-algebra, it follows that $E^0(BA)/I_{tr}$ contains torsion as well.
\end{proof}

We conclude with a short comparison of \Cref{mainthm} with the existing literature. In \cite{etheorysym}, Strickland proves that $E^0(B\Sigma_{p^k})/I_{tr}$ is a free $E^0$-module, where $I_{tr}$ is the ideal generated by transfers along $\Sigma_i \times \Sigma_j \subset \Sigma_{p^k}$ with $i+j = p^k$ and $i,j>0$. Of course, $\Sigma_{p^k}$ is only abelian when $p=2$ and $k=1$, in which case it is cyclic. Further, given a transitive abelian subgroup $A \subseteq \Sigma_{p^k}$, there is an induced map of commutative rings
\[
E^0(B\Sigma_{p^k})/I_{tr} \to E^0(BA)/I_{tr}.
\]
However, due to the direction of the map, \cref{mainthm} does not have any consequences on the existence torsion in $E^0(B\Sigma_{p^k})/I_{tr}$. 

In \cite{genstrickland}, the main result of \cite{etheorysym} is generalized to groups of the form $A \wr \Sigma_{p^k}$. The transfer ideal $I_{tr} \subset E^0(BA \wr \Sigma_{p^k})$ considered there is generated by transfers along $A \wr (\Sigma_i \times \Sigma_j) \subseteq A \wr \Sigma_{p^k}$ with $i+j = p^k$ and $i,j>0$. They show that $E^0(BA \wr \Sigma_{p^k})/I_{tr}$ is a free $E^0$-module. In \cite{nelson}, Nelson proves that $E^0(B\Sigma_{p^k} \wr \Sigma_{p^l})/I_{tr}$ is free, where $I_{tr}$ is generated by transfers along the subgroups $(\Sigma_{i} \times \Sigma_j) \wr \Sigma_{p^l} \subset \Sigma_{p^k} \wr \Sigma_{p^l}$, where $i+j = p^k$ and $i,j>0$ and along the subgroups $\Sigma_{p^k} \wr (\Sigma_{i} \times \Sigma_j) \subset \Sigma_{p^k} \wr \Sigma_{p^l}$, where $i+j = p^l$ and $i,j>0$. In both of these cases, these results do not contradict our main result.

\biblio
\bibliography{bib}\bibliographystyle{alpha}

\end{document}